\theoremstyle{definition}
\newtheorem{theorem}{Theorem}[section]
\newtheorem{question}[theorem]{Question}
\newtheorem{corollary}[theorem]{Corollary}
\newtheorem{lemma}[theorem]{Lemma}
\newtheorem{proposition}[theorem]{Proposition}
\newtheorem{notation}[theorem]{Notation}
\theoremstyle{definition}
\newtheorem{definition}[theorem]{Definition}
\newtheorem{setting}[theorem]{Setting}
\newtheorem{example}[theorem]{Example}
\newtheorem{remark}[theorem]{Remark}
\numberwithin{equation}{subsection}
\newtheorem{theoremx}{Theorem}
\newcommand{\m}{\mathfrak{m}}
\newcommand{\NN}{\mathbb{N}}
\newcommand{\ZZ}{\mathbb{Z}}
\newcommand{\Spec}{\operatorname{Spec}}
\newcommand{\Hom}{\operatorname{Hom}}
\renewcommand{\a}{\mathfrak{a}}
\newcommand{\q}{\mathfrak{q}}
\newcommand{\ov}[1]{\overline{#1}}
\newcommand{\n}{\mathfrak{n}}
\newcommand{\lr}[1]{{\langle {#1} \rangle}}
\newcommand{\difM}[1]{^{\lr{#1}_{\mathrm{mix}}}}
\renewcommand{\geq}{\geqslant}
\renewcommand{\leq}{\leqslant}
\newcommand{\Cp}{\mathcal{C}_p}
\newcommand{\Dq}[1]{^{\{{#1}\}}}
\author[De Stefani]{Alessandro De Stefani}
\address{Dipartimento di Matematica, Universit{\`a} di Genova, Via Dodecaneso 35, 16146 Genova, Italy}
\email{destefani@dima.unige.it}
\author[Grifo]{Elo\'isa Grifo}
\address{Department of Mathematics, University of Nebraska, Lincoln, NE 68588-0130, USA}
\email{grifo@unl.edu}
\author[Jeffries]{Jack Jeffries}
\address{Department of Mathematics, University of Nebraska, Lincoln, NE 68588-0130, USA}
\email{jack.jeffries@unl.edu}
\begin{document}
\newcommand{\tens}{\otimes}
\newcommand{\hhtest}[1]{\tau ( #1 )}
\renewcommand{\hom}[3]{\operatorname{Hom}_{#1} ( #2, #3 )}
\renewcommand{\t}{\operatorname{type}}

\title[A uniform Chevalley Theorem in mixed characteristic]{A uniform Chevalley Theorem for direct summands of polynomial rings in mixed characteristic}

\maketitle

\begin{abstract}
We prove an explicit uniform Chevalley theorem for direct summands of graded polynomial rings in mixed characteristic. Our strategy relies on the introduction of a new type of differential powers that does not require the existence of a $p$-derivation on the direct summand.
\end{abstract}

\section{Introduction}

A classical result of Chevalley \cite[II, Lemma 7]{Chevalley} asserts that, if $(R,\m)$ is a complete local ring and $\{I_n\}$ is a decreasing family of ideals of $R$ such that $\bigcap_{n \geq 0} I_n= (0)$, then there exists a function $f:\NN \to \NN$ such that $I_{f(n)} \subseteq \m^n$ for all $n \in \NN$. Put in different words, Chevalley's Theorem states that if $\bigcap_{n \geq 0} I_n=(0)$ then the topology induced by $\{I_n\}$ is finer than the $\m$-adic topology. In the special case of symbolic powers, that is, when $I_n = I^{(n)}$ for some proper ideal $I$ in $R$ such that $\bigcap_{n \geq 1} I^{(n)}=(0)$, Huneke, Katz, and Validashti show that if $(R,\m)$ is also reduced, then one can choose $f(n) = nC$ for some integer $C>0$ that is independent of $I$ \cite[Theorem 2.3]{HKV09}. More generally, if $R$ is Noetherian, $\m$ is a maximal ideal, and $R_\m$ is analytically unramified, there exists $C>0$ such that $I^{(nC)} \subseteq \m^n$ for all $n \geq 1$ and all ideals $I \subseteq \m$ such that $\{I^{(n)}\}$ defines topology that is finer than the $\m$-adic topology \cite[Corollary~2.4]{HKV09} (see also \cite{Swanson} for an analogous result for the comparison between ordinary and symbolic powers of $I$). 

Finding an explicit $C$ such that $I^{(nC)} \subseteq \m^n$ gives a lower bound, $n$, on the $\m$-adic order of elements in $I^{(nC)}$. We can think of this type of statement as a local ring version of computing minimal degrees $\alpha(-)$ for symbolic powers of homogeneous ideals in a graded ring.

This article deals with the case when $R=A[f_1,\ldots,f_t]$ is a graded direct summand of a polynomial ring $S=A[x_1,\ldots,x_m]$, with $\deg(x_i)>0$ and where $(A,\m_A)$ is, for the moment, any regular local ring. Notable examples of such rings are Veronese subrings and Segre products of polynomial algebras over $A$. Rings of invariants of $S$ under the action of a finite group $G$ are also of this type, provided the order of $G$ is invertible in $A$. In this setup, if we let $\m=(f_1,\ldots,f_t)R + \m_AR$, then there exists a uniform $C>0$ such that $Q^{(nC)} \subseteq \m^n$ for all homogeneous prime ideals $Q$ of $R$. In fact, if $\widehat{R}$ denotes the completion of $R$ at $\m$, we have that $\widehat{R}$ is a complete normal local domain. It then follows from \cite[Proposition 2.4]{HKV} (see also \cite{HKVERRATA}) that, for every prime $Q$ of $R$ that is contained in $\m$, there exists $C>0$ such that $Q^{(nC)} \subseteq (Q\widehat{R})^{(nC)} \cap R \subseteq (Q\widehat{R})^n \cap R = Q^n \subseteq \m^n$ for every $n\geq 1$. The aforementioned result of Huneke, Katz, and Validashti guarantees then that there is a uniform $D>0$ such that $Q^{(nD)} \subseteq \m^n$ for every prime ideal $Q \subseteq \m$ and every $n \geq 1$. However, finding an explicit value for $D$ generally proves to be an extremely challenging task.

When $A$ is a perfect field, an explicit answer is provided in \cite[Theorem 3.7]{SurveySP}: in the notation introduced above, if $D = \max\{\deg(f_1),\ldots, \deg(f_t)\}$, then $Q^{(nD)} \subseteq \m^n$ for every homogeneous prime ideal $Q$ of $R$ and every positive integer $n$. The main goal of this article is to extend this result to the mixed characteristic setting.

\begin{theoremx}[Corollary \ref{main Coroll}] \label{THMX A}
Let $p$ be a prime integer, and $A$ be a discrete valuation ring with uniformizer $p$ and perfect residue field. Let $S=A[x_1,\ldots,x_m]$, with $\deg(x_i) >0$ for all $i$, and $f_1,\ldots,f_t \in S$ be homogeneous elements of degree at most $D$ such that $R=A[f_1,\ldots,f_t]$ is a graded direct summand of $S$. If we let $\m=(f_1,\ldots,f_t,p)R$, then $Q^{(nD)} \subseteq \m^n$ for all $n \geq 1$ and all homogeneous prime ideals $Q$ in $R$.
\end{theoremx}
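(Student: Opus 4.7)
The plan is to pass through a new, ``mixed'' flavor of differential powers on $R$, defined using only operators on the ambient polynomial ring $S$ together with the splitting $\pi \colon S \to R$. This circumvents the obstruction, announced in the abstract, that $R$ need not itself admit a $p$-derivation.

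Concretely, fix a $p$-derivation $\delta$ on $S$ (which exists since $A$ is a DVR with perfect residue field and $S$ is polynomial over $A$) and consider $A$-linear differential operators on $S$. For an ideal $Q \subseteq R$, define a mixed differential power $Q\difM{n} \subseteq R$ to consist of those $f \in R$ such that $\pi(\theta(f)) \in Q$ for every composition $\theta$ of differential operators on $S$ and iterates of $\delta$ of combined ``mixed order'' less than $n$. The initial tasks are to verify that $Q\difM{n}$ is an ideal, is independent of the choices of $\pi$ and $\delta$, and behaves well under localization at $Q$.

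With these basics in place, the argument proceeds via two comparisons. First, a Zariski--Nagata-type inclusion $Q^{(n)} \subseteq Q\difM{n}$ for every prime $Q$ of $R$. The idea is to reduce to the corresponding statement in the regular ring $S$---where a mixed-characteristic Zariski--Nagata theorem using $p$-derivations alongside differential operators is available---and then transport the conclusion back through $\pi$, which preserves the symbolic data because $R \hookrightarrow S$ splits as $R$-modules. Second, and more delicately, the ``anti-Zariski--Nagata'' bound $\m\difM{nD} \subseteq \m^n$, which is where the factor $D = \max_i \deg(f_i)$ enters. If $f \notin \m^n$, then the image of $f$ in $\m^{n-1}/\m^n$ is nonzero and its leading form involves a product of at most $n-1$ of the generators $f_1,\dots,f_t,p$, sitting in $S$-degree at most $(n-1)D$. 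A carefully chosen composition $\theta$ on $S$ of partial derivatives in the $x_j$ (to strip off the $f_{i_j}$ factors) together with iterates of $\delta$ (to handle powers of $p$), of total mixed order at most $nD-1$, is designed so that $\pi(\theta(f)) \notin \m$, detecting this leading form.

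Chaining the comparisons then yields $Q^{(nD)} \subseteq Q\difM{nD} \subseteq \m\difM{nD} \subseteq \m^n$ for every homogeneous prime $Q$ of $R$, since such $Q$ is automatically contained in $\m$. The principal difficulty is the anti-Zariski--Nagata step: one must calibrate the combinatorics of the mixed operators so that the order count controls the $\m$-adic filtration (where the scaling $D$ is intrinsic to the $R$-generating degrees, rather than the finer $S$-grading), and handle the fact that $\delta$ need not preserve $R$---which is precisely why composition with the splitting $\pi$ is built into the definition.
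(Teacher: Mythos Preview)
Your overall architecture matches the paper's: define a mixed differential power of ideals in $R$ using $A$-linear differential operators and a $p$-derivation on the ambient $S$ together with the splitting, show $Q^{(n)}$ lands in this power, and then compare with $\m^n$. A few points of divergence are worth flagging.

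First, some technical choices. The paper works only modulo $p$: the new power $Q\Dq{n}$ is defined via the reduced splitting $\ov{\beta}:\ov{S}\to\ov{R}$, and only operators of the specific shape $\delta^a\circ\partial$ (with $\partial$ a differential operator applied first) are used, not arbitrary words in differential operators and $\delta$. This economizes the verification that $Q\Dq{n}$ is an ideal and $Q$-primary (their Proposition~\ref{Proposition Dq}); your more liberal definition would require more bookkeeping. The paper also splits into two cases, handling homogeneous primes $Q$ with $p\notin Q$ by the purely differential (no $\delta$) argument of Theorem~\ref{THM no p}, and reserving the new mixed powers for primes containing $p$. Finally, the paper makes no claim that the construction is independent of the choices of splitting and $p$-derivation; that assertion in your plan is unsupported and, as far as one can tell, unnecessary.

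The substantive gap is in your ``anti-Zariski--Nagata'' step $\m\difM{nD}\subseteq\m^n$. Your sketch proposes to take $f\notin\m^n$, read off a leading form in $\m^{k}/\m^{k+1}$ as a product of at most $n-1$ generators ``sitting in $S$-degree at most $(n-1)D$'', and then build a mixed operator of order at most $nD-1$ that strips these factors. But a class in $\m^{k}/\m^{k+1}$ is a \emph{sum} of such products, the generators $f_i$ are not coordinates on $S$, and there is no evident reason the operator you build on one summand does not interact badly with the others or with higher-order terms of $f$. The paper avoids this entirely: it never tries to detect $\m$-order by operators. Instead, it shows (using the mixed Zariski--Nagata theorem for $\n$ in $S$) that $Q\Dq{nD}\subseteq R\cap\n^{nD}$, and then proves $R\cap\n^{nD}\subseteq\m^n$ by a short, purely degree-theoretic argument on homogeneous elements (write $g=p^{nD-d}\widetilde g$ with $\widetilde g\notin pS$, bound $\deg\widetilde g\geq d$, and count). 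The factor $D$ enters only in this elementary degree count, not in any operator construction. Routing your second comparison through $R\cap\n^{nD}$ would close the gap and is in fact what the paper does.
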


There are examples where $D$ as in Theorem \ref{THMX A} is optimal; in this sense, our result is sharp (see Example \ref{Example sharp}). For primes $Q$ that contain $p$, our strategy only requires that $R/(p)$ is a direct summand of $S/(p)$ (see Theorem \ref{main THM} and Remark \ref{Remark direct summand mod p}), and it actually works even if $Q$ is not homogeneous, as long as it is contained in $\m$. On the other hand, for homogeneous primes $Q \subseteq R$ that do not contain $p$ we actually get the stronger statement that $Q^{(nD)} \subseteq (f_1,\ldots,f_t)^n$ (see Theorem \ref{THM no p}). We also obtain analogous results for $A=\ZZ$. 

Our proofs makes crucial use of $p$-derivations, a notion introduced independently by Joyal \cite{Joyal} and Buium \cite{Buium1995}, and related to symbolic powers in mixed characteristic \cite{ZNmixed}. The main challenge is that, while $S$ as above always has a $p$-derivation \cite[Proposition 2.7]{ZNmixed}, $R$ may in principle lack one. This is circumvented by defining a new type of differential power that uses the $p$-derivation on $S$ combined with the fact that $R$ is a graded direct summand of $S$ (see Definition \ref{Defn Dq}). 

Using several improvements of the Noether bounds, we obtain the following application of our main result.

\begin{theoremx}(see Theorem \ref{THM Noether bound})
Let $p$ be a prime integer, and $A$ be a discrete valuation ring with uniformizer $p$ and perfect residue field. Let $G$ be a finite group of order $D$ coprime with $p$ acting on $S=A[x_1,\ldots,x_n]$, where $\deg(x_i)=1$ for all $i$. Let $R=S^{G}$ be the ring of invariants, and $\m = R_{>0} + pR$. Then $Q^{(nD)} \subseteq \m^n$ for every homogeneous prime ideal $Q \subseteq R$ and every integer $n$.
\end{theoremx}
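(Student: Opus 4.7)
The plan is to reduce this statement to Theorem A by verifying its two hypotheses for $R = S^G$: namely, that $R$ is a graded direct summand of $S$, and that $R$ is generated over $A$ by homogeneous elements of degree at most $D = |G|$.

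First, since $|G| = D$ is coprime with $p$, the order of $G$ is a unit in the DVR $A$. Therefore the Reynolds operator
\[
\rho \colon S \longrightarrow R, \qquad \rho(s) = \frac{1}{|G|} \sum_{g \in G} g(s),
\]
is well-defined, $R$-linear, and splits the inclusion $R \hookrightarrow S$. Because $G$ acts by graded automorphisms, $\rho$ preserves degrees, so $R$ is a graded direct summand of $S$. This puts us in the setting of Theorem A.

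Second, I would invoke an appropriate form of the Noether bound to guarantee that $R = S^G$ is generated as an $A$-algebra by homogeneous invariants of degree at most $|G| = D$. The classical Noether bound was proven over fields of characteristic zero, but the argument extends to any commutative base ring in which $|G|$ is invertible (versions over more general base rings are due to Fleischmann and Fogarty in the non-modular setting). Since $|G|$ is a unit in $A$, this applies to our situation and produces homogeneous generators $f_1, \ldots, f_t$ of $R$ over $A$ with $\deg(f_i) \leq D$ for all $i$. Note that $R_{>0} = (f_1, \ldots, f_t)R$, so $\m = R_{>0} + pR = (f_1, \ldots, f_t, p)R$, matching the setup of Theorem A.

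Finally, applying Theorem A directly yields $Q^{(nD)} \subseteq \m^n$ for every homogeneous prime ideal $Q \subseteq R$ and every $n \geq 1$, which is exactly the claim. The main obstacle is citing or deploying the correct form of the Noether bound in the mixed-characteristic setting: one must ensure that the non-modular Noether bound holds with $A$ as the base ring (rather than just a field), which is precisely where the hypothesis $\gcd(|G|, p) = 1$, combined with $p$ being the uniformizer of $A$, is essential. Once that is in place, the theorem follows as an immediate corollary of Theorem A.
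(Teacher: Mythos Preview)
Your proposal is correct and follows essentially the same approach as the paper: the paper's proof is a one-liner citing Fleischmann's Noether bound \cite[Theorem 3.1]{Fleischmann} (and Fogarty) together with Corollary~\ref{main Coroll} (i.e., Theorem~A), and you have simply unpacked the two ingredients---the Reynolds operator giving the graded splitting, and the non-modular Noether bound giving generators in degree at most $|G|$---that make Corollary~\ref{main Coroll} applicable.
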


In Section \ref{Section comparison}, we compare the newly introduced notion of differential powers with the ones already available in the literature. We also present some concrete examples in which our results can be applied.

\section{Differential powers of direct summands}

Let $R$ be a commutative ring with multiplicative identity. Given an ideal $\a \subseteq R$, we let $W$ be the complement of the union of the minimal primes of $\a$. The $n$th symbolic power of $I$ is defined as $\a^{(n)} = \a^nR_W \cap R$. In particular, when $Q$ is a prime, the $n$th symbolic power $Q^{(n)} = Q^nR_Q \cap R$ is just the $Q$-primary component of the ordinary power $Q^n$.

A classical result due to Zariski and Nagata \cite{Zariski,Nagata} identifies the $n$th symbolic power of a prime ideal $Q \subseteq\mathbb{C}[x_1,\ldots,x_m]$ as the ideal of all functions that vanish up to order~$n$ along the variety defined by $Q$. This was extended in several ways, first to include the case of polynomial rings over a perfect field \cite{SurveySP}, and then to cover the case of certain polynomial rings of mixed characteristic \cite{ZNmixed}. As the latter will be particularly relevant for the purposes of this article, we now recall the main notions and the results obtained by these three authors in \cite{ZNmixed}.

\begin{definition} 
Let $A$ be a commutative ring with unity, and $R$ be an $A$-algebra. The $R$-module of $A$-linear differential operators on $R$ of order at most $n$ is the $R$-module $D^n_{R|A}$ defined inductively as follows:
\begin{itemize}
\item $D^0_{R|A} = \Hom_R(R,R) \subseteq \Hom_A(R,R)$.
\item $D^n_{R|A} = \{\partial \in \Hom_A(R,R) \mid [\partial,r] \in D^{n-1}_{R|A}$ for every $r \in R\}$.
\end{itemize}
\end{definition}

If $R$ is an $A$-algebra, and $I \subseteq R$ is an ideal, we define the $n$th ($A$-linear) differential power of $I$ as
\[
I^{\langle n \rangle_A} \colonequals \{f \in R \mid \partial(f) \in I \text{ for all } \partial \in D^{n-1}_{R|A}\}.
\]

It can be shown that $I^{\langle n \rangle_A}$ is an ideal of $R$ \cite[Proposition 2.4]{SurveySP}. Moreover, if $\a$ is a $Q$-primary ideal, then so is $\a^{\langle n \rangle_A}$ for every $n \geq 1$; in particular, $\a^{(n)} \subseteq \a^{\langle n \rangle_A}$ (see \cite[Proposition 3.2]{ZNmixed}).

\begin{definition}\cite{Joyal,Buium1995} Let $p$ be a prime integer, and $S$ be a ring over which $p$ is a nonzerodivisor. A set-theoretic map $\delta\!: S \to S$ is called a $p$-derivation if the map $\phi_p:S \to S$ defined as $\phi_p(x) = x^p+p\delta(x)$ is a ring homomorphism.
\end{definition}

Equivalently, one can check that $\delta$ is a $p$-derivation if $\delta(1) = 0$ and for all $x,y \in S$ we have
\[
\delta(xy) = x^p\delta(y) + \delta(x)y^p + p\delta(x)\delta(y)
\]
and
\[
\delta(x+y) = \delta(x) + \delta(y) + \mathcal{C}_p(x,y),
\]
where $\mathcal{C}_p(x,y) = \frac{x^p+y^p-(x+y)^p}{p}$. The map $\phi_p$ is called a lift of Frobenius, since it induces the Frobenius map on $S/(p)$. 

\

We will need some properties of $p$-derivations, which we include in the following technical lemma.

\begin{lemma} \label{Lemma Prop delta} 
Fix a prime $p \in \mathbb{Z}$, let $S$ be a ring in which $p$ is a nonzerodivisor, and let $\delta$ be a $p$-derivation on $S$. Given $x,y \in S$, and any $n \geq 1$, we have 
\begin{enumerate}
\item $\delta^n(x+y) -\delta^n(x) \in (y,\delta(y),\ldots,\delta^n(y))S$.
\item $\delta^n(F(x)) \in (x,\delta(x),\ldots,\delta^{n}(x),p\delta^{n+1}(x))S$.
\item $\delta^n (xy) - x^{p^n}\delta^n(y) \in (y,\delta(y),\ldots,\delta^{n-1}(y),p\delta^n(y))S$.

\noindent In particular,
\begin{enumerate}[(a)]
\item $\delta^n(xy) \in (y, \delta(y), \delta^2(y), \ldots, \delta^n(y))$, and 
\item $\delta^n(py) \in (y, \delta(y), \delta^2(y), \ldots, \delta^{n-1}(y), p\delta^n(y))$.
\end{enumerate}
\end{enumerate}
\end{lemma}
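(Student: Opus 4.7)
My plan is to prove (1), (3), and the subclaims (3)(a), (3)(b) by induction on $n$, and then deduce (2). The base case $n=1$ of each statement is an immediate check using the two defining identities of a $p$-derivation,
\[
\delta(x+y) = \delta(x) + \delta(y) + \mathcal{C}_p(x,y), \qquad \delta(xy) = x^p \delta(y) + \delta(x)\, y^p + p\delta(x)\delta(y),
\]
together with the elementary observation that $\mathcal{C}_p(x,y) = -\sum_{k=1}^{p-1} \tfrac{1}{p}\binom{p}{k} x^{p-k}y^k$ lies in the ideal $(xy)$. For example, $\delta(x+y)-\delta(x) = \delta(y) + \mathcal{C}_p(x,y) \in (y,\delta(y))$, and $\delta(xy) - x^p\delta(y) = \delta(x)y^p + p\delta(x)\delta(y) \in (y, p\delta(y))$.

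The engine of the induction is the following closure property. Write $I_n(y) = (y,\delta(y),\ldots,\delta^n(y))S$ and $J_n(y) = (y,\delta(y),\ldots,\delta^{n-1}(y), p\delta^n(y))S$. I claim that $\delta(I_{n-1}(y)) \subseteq I_n(y)$ and $\delta(J_{n-1}(y)) \subseteq J_n(y)$. To verify this, write an element of the ideal as an $S$-linear combination of the generators and apply $\delta$ term by term; the correction terms produced by additivity are values of $\mathcal{C}_p(-,-)$ on elements already in the ideal, hence stay in it, and Leibniz gives $\delta(a\delta^i(y)) = a^p\delta^{i+1}(y) + \delta(a)\delta^i(y)^p + p\delta(a)\delta^{i+1}(y) \in (\delta^i(y),\delta^{i+1}(y))$. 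For the distinguished generator $p\delta^{n-1}(y)$ of $J_{n-1}(y)$ one needs the auxiliary fact $\delta(pz) \in (z, p\delta(z))$, which follows from Leibniz applied to $p$ and $z$ together with the observation that $p^p\delta(z) \in (p\delta(z))$.

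Given closure, (1) follows by writing $\delta^n(x+y) = \delta(\delta^{n-1}(x) + r)$ with $r = \delta^{n-1}(x+y)-\delta^{n-1}(x) \in I_{n-1}(y)$ and applying the base case together with the closure bound on $\delta(r)$. Statement (3) is parallel: by induction $\delta^{n-1}(xy) = x^{p^{n-1}}\delta^{n-1}(y) + t$ with $t \in J_{n-1}(y)$, and Leibniz applied to $\delta(x^{p^{n-1}}\delta^{n-1}(y))$ produces the leading term $x^{p^n}\delta^n(y)$ plus elements of $J_n(y)$; closure absorbs $\delta(t)$. The corollaries are immediate: for (a), $x^{p^n}\delta^n(y) \in (\delta^n(y))$; for (b), substituting $x=p$ gives $x^{p^n} \in (p)$, so the leading term lies in $(p\delta^n(y))$.

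For (2), expand $\delta^n(\phi_p(x)) = \delta^n(x^p + p\delta(x))$ and use (1) to reduce modulo the ideal generated by the iterated $\delta$-images of $p\delta(x)$; by (3)(b) applied with $y = \delta(x)$, each $\delta^k(p\delta(x))$ lies in $(\delta(x),\ldots,\delta^k(x), p\delta^{k+1}(x))$, which is contained in the target ideal. It remains to show $\delta^n(x^p) \in (x,\delta(x),\ldots,\delta^n(x))$, which I would obtain by a short secondary induction on the exponent: by (3)(a), $\delta^n(x^k) = \delta^n(x\cdot x^{k-1}) \in (x^{k-1},\delta(x^{k-1}),\ldots,\delta^n(x^{k-1}))$, and the inductive hypothesis on $k-1$ places each generator inside $(x,\delta(x),\ldots,\delta^n(x))$. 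The main technical point to watch is the closure statement for the $p$-shifted ideals $J_n(y)$: because $\delta(p) \neq 0$, one must track carefully where the factor of $p$ in $p\delta^n(y)$ is needed to absorb terms involving $\delta^{n+1}(y)$, and this bookkeeping is the crux of the argument.
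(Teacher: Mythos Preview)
Your proposal is correct and takes a genuinely different route from the paper.

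The paper peels off the \emph{innermost} application of $\delta$, writing $\delta^n(x+y)=\delta^{n-1}(\delta(x)+z_1')$ with $z_1'\in(y,\delta(y))$ and $\delta^n(xy)=\delta^{n-1}(x^p\delta(y)+F(y)\delta(x))$, and then invokes the induction hypothesis on $\delta^{n-1}$. Because the term $F(y)\delta(x)$ appears in the product expansion, controlling $\delta^{n-1}(F(y)\delta(x))$ forces the paper to prove (2) \emph{inside} the same simultaneous induction with (1) and (3); the three statements are interlocked.

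You instead peel off the \emph{outermost} $\delta$, writing $\delta^n(x+y)=\delta(\delta^{n-1}(x)+r)$ and $\delta^n(xy)=\delta(x^{p^{n-1}}\delta^{n-1}(y)+t)$, and encapsulate the single outer step in the closure statements $\delta(I_{n-1}(y))\subseteq I_n(y)$ and $\delta(J_{n-1}(y))\subseteq J_n(y)$. This is cleaner: once the closure lemma is in hand (and your verification of it, including the $\delta(pz)\in(z,p\delta(z))$ step, is correct), the inductions for (1) and (3) are self-contained and make no reference to (2). You then derive (2) after the fact from (1), (3)(a), and (3)(b), together with the short secondary induction on the exponent to handle $\delta^n(x^p)$. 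The upshot is a more modular argument in which the role of the Frobenius lift $F$ is invisible until the very end, whereas the paper's organization makes (2) an essential rung on the ladder. Both approaches are valid; yours trades the paper's intertwined induction for an explicit closure lemma that absorbs the bookkeeping in one place.
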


\begin{proof} 
First, note that $\Cp(x,y) \in xS$, a fact that we will use repeatedly. Note also that (a) and (b) are immediate consequences of (3), and thus do not require proof. 

We prove all claims at once using induction on $n \geq 1$. First, take $n=1$. We have
\[
\delta(x+y) = \delta(x)+ \delta(y) + \Cp(x,y) =\delta(x) + z_1,
\]
where $z_1 = \delta(y) + \Cp(x,y) \in (y,\delta(y))S$ because $\Cp(x,y) \in yS$. This proves the base case of (1). For (2), observe that 
\[
\delta(F(x)) = \delta(x^p+p\delta(x)) = \delta(x^p)  + \delta(p\delta(x)) + \Cp(x^p,p\delta(x)).
\]
On the one hand, $\Cp(x^p, p\delta(x)) \in xS$. Moreover, $\delta(p\delta(x)) \in (\delta(x), p \delta^2(x))$, since
$$\delta(p\delta(x)) = p^{p-1} \delta^2(x) + \delta(x)^p \delta(p) + p \delta^2(x) \delta(p).$$
Finally, since $\delta(x^p)  = x^p\delta(x^{p-1}) + x^{p^2-p}\delta(x) + p\delta(x^{p-1})\delta(x)$, we conclude that $\delta(F(x)) \in (x,\delta(x),p\delta^2(x))S$, as claimed.

For (3), note that
\[
\delta(xy) = x^p\delta(y) + F(y)\delta(x)  = x^p\delta(y) + z_2,
\]
where $z_2 \in F(y)S \subseteq (y,p\delta(y))S$.

We now assume all claims hold for $n-1$, and prove that they also hold for $n$, starting from (1). Observe that 
\[
\delta^n(x+y) = \delta^{n-1}(\delta(x+y)) = \delta^{n-1}(\delta(x)+z_1')
\]
where $z_1' \in (y,\delta(y))S$. By induction, we know that $\delta^{n-1}(\delta(x)+z_1') = \delta^n(x) + z_1$ for some $z_1 \in (z_1',\delta(z_1'),\ldots,\delta^{n-1}(z_1'))S$. As $z_1' \in (y,\delta(y))S$, we can write it as $a\delta(y) + by$ for some $a,b\in S$. Our inductive hypothesis on (1) guarantees that for all $1 \leq j \leq n-1$ we have $\delta^j(z_1') = \delta^j(a \delta(y)) + z_3$, where $z_3 \in (y,\delta(y),\ldots,\delta^j(y))S$. 

Finally, we use induction and (3) again, to obtain $\delta^j(a\delta(y)) \in (\delta(y),\ldots,\delta^{j+1}(y))S$. Putting it all together, we get $z_1 \in (y,\delta(y),\ldots,\delta^n(y))S$, as desired.

For (2), using the fact that $\delta(F(x)) \in (x,\delta(x),p\delta^2(x))S$, which we already proved, we can write $\delta(F(x)) = a\delta(x)+b$, where $b \in (x,p\delta^2(x))S$. Then by (1) we have
$$\delta^n(F(x)) = \delta^{n-1}(a\delta(x) + b) \in (\delta^{n-1}(a\delta(x)),b,\delta(b),\ldots,\delta^{n-1}(b))S.$$ 
Again by the induction hypotheses on (3), we get $\delta^{n-1}(a\delta(x)) \in (\delta(x),\ldots,\delta^{n}(x),p\delta^{n+1}(x))S$. Moreover, for all $1 \leq j \leq n-1$ we have $\delta^j(b) \in (\delta^j(p\delta^2(x)),x,\delta(x),\ldots,\delta^j(x))S$, since $b \in (p\delta^2(x),x)S$. By induction, $\delta^j(p\delta^2(x)) \in (\delta^2(x),\delta^3(x),\ldots,\delta^{j+1}(x),p\delta^{j+2}(x))S$. Therefore 
$$\delta^n(F(x)) \in (x,\delta(x),\ldots,\delta^n(x),p\delta^{n+1}(x))S,$$ 
as claimed.

We finally prove (3). We have
\[
\delta^n(xy) = \delta^{n-1}(x^p\delta(y) + F(y)\delta(x)) = \delta^{n-1}(x^p\delta(y)) + z_2',
\]
where 
$$z_2' \in \left( F(y)\delta(x), \delta(F(y)\delta(x)), \ldots, \delta^{n-1}(F(y)\delta(x) \right) \subseteq \left( F(y),\delta(F(y)),\ldots,\delta^{n-1}(F(y))\right)S,$$ 
by (1) and (3). By (2), for all $1 \leq j \leq n-1$ we have $\delta^j(F(y)) \in (y,\delta(y),\ldots,\delta^j(y),p\delta^{j+1}(y))S$, and thus $z_2' \in (y,\delta(y),\ldots,\delta^{n-1}(y),p\delta^n(y))S$. Finally, by induction $\delta^{n-1}(x^p\delta(y)) = x^{p^n}\delta^n(y) + z_2''$, where $z_2'' \in (\delta(y),\ldots,\delta^{n-1}(y),p\delta^n(y))S$. Setting $z_2=z_2'+z_2''$ concludes the proof.
\end{proof}

Given a ring with a $p$-derivation, we can define mixed differential powers.

\begin{definition}
Let $S$ be an $A$-algebra with a $p$-derivation $\delta$. Given an integer $n$ and a prime ideal $Q$ of $S$, we define the $n$th mixed differential power of $Q$ as
\[
Q\difM{n} = \{f \in S \mid (\delta^a \circ \partial)(f) \in Q \text{ for all } \partial \in D_{S|A}^b \text{ with } a+b \leq n-1\}.
\]
\end{definition}

Mixed differential powers were introduced in \cite{ZNmixed} in order to obtain a Zariski--Nagata type theorem in mixed characteristic for prime ideals that contain the integer $p$.

\begin{theorem}[\protect{\cite[Theorem B]{ZNmixed}}]
Let $p$ be a prime integer, and $S=A[x_1,\ldots,x_m]$, where $A$ is either $\ZZ$ or a discrete valuation ring with uniformizer $p$ and perfect residue field that has a $p$-derivation. If $Q$ is a prime ideal of $S$ that contains $p$, then $Q^{(n)} = Q\difM{n}$.
\end{theorem}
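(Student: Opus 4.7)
The plan is to prove the two containments separately; the forward inclusion $Q^{(n)} \subseteq Q\difM{n}$ will follow directly from Lemma \ref{Lemma Prop delta}, while the reverse will require a completion argument.

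\textbf{Forward direction.} I would combine two facts. First, the classical Zariski--Nagata-type statement that any $A$-linear differential operator $\partial \in D^b_{S|A}$ satisfies $\partial(Q^{(n)}) \subseteq Q^{(n-b)}$, proved by induction on $b$ via the commutator identity $\partial(uf) = u\partial(f) + [\partial,u](f)$ with $u \notin Q$ chosen so that $uf \in Q^n$. Second, the new mixed-characteristic input: if $g \in Q^{(m)}$ and $p \in Q$, then $\delta^a(g) \in Q$ for every $a \leq m-1$. I would prove this by induction on $a$. Choose $u \notin Q$ with $ug \in Q^{a+1}$; a side induction using parts (1) and (3) of Lemma \ref{Lemma Prop delta} together with the fact that $p \in Q$ shows that $\delta^a$ applied to any product of at least $a+1$ elements of $Q$ lands in $Q$, whence $\delta^a(ug) \in Q$. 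Lemma \ref{Lemma Prop delta}(3) then yields $u^{p^a}\delta^a(g) \equiv \delta^a(ug) \pmod{(g,\delta(g),\ldots,\delta^{a-1}(g),p\delta^a(g))}$, and the generators on the right all lie in $Q$ by the inductive hypothesis (for $\delta^j(g)$ with $j < a$) and because $p \in Q$. Primality of $Q$ and $u \notin Q$ then force $\delta^a(g) \in Q$. Combining: for $f \in Q^{(n)}$ and $a + b \leq n-1$, we get $\partial(f) \in Q^{(n-b)}$ and hence $\delta^a(\partial(f)) \in Q$, so $f \in Q\difM{n}$.

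\textbf{Reverse direction.} I would argue by induction on $n$, the case $n = 1$ being immediate. Since $Q\difM{n} \subseteq Q\difM{n-1} = Q^{(n-1)}$ by induction, it suffices to show that $f \in Q^{(n-1)} \cap Q\difM{n}$ lies in $Q^{(n)}$. Localizing at $Q$ and passing to $\widehat{S_Q}$, we land in a regular complete local ring containing $p$ in its maximal ideal; by Cohen's structure theorem it is a formal power series ring over a complete DVR with uniformizer $p$ and perfect residue field. After extending $\delta$ and differential operators compatibly to $\widehat{S_Q}$, the reduction modulo $p$ places the initial form of $f$ in a polynomial ring over a perfect field, where the classical characteristic-$p$ Zariski--Nagata theorem provides a differential operator detecting its leading coefficient. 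Lifting that operator back and composing with the appropriate power of $\delta$ (which modulo $p$ realizes iterated Frobenius) produces a mixed operator $\delta^a \circ \partial$ witnessing $f \notin Q\difM{n}$, a contradiction that forces $f \in Q^{(n)}$.

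\textbf{Main obstacle.} The main technical difficulty lies in the reverse direction: the $p$-derivation $\delta$ does not localize canonically to $S_Q$ nor extend uniquely to $\widehat{S_Q}$, so one must invoke an extension result guaranteeing a compatible $p$-derivation on the completion. Once that is in place, the core step is to make rigorous the correspondence between mixed differential operators on $S$ and the ordinary differential-power structure on $\widehat{S_Q}/(p)$, using iterated $p$-derivations as a lift of iterated Frobenius to detect initial forms.
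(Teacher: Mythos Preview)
This theorem is not proved in the present paper; it is quoted verbatim from \cite[Theorem B]{ZNmixed} and used as a black box throughout. There is therefore no proof here against which to compare your proposal.

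That said, your forward direction matches the approach taken in \cite{ZNmixed}: the present paper explicitly cites \cite[Propositions 3.2 and 3.13]{ZNmixed} (in the proof of Proposition~\ref{Proposition Dq}) for the fact that $\delta^s(\partial(Q^n)) \subseteq Q$ whenever $s+t<n$ and $\partial \in D^t_{S|A}$, and your inductive argument via Lemma~\ref{Lemma Prop delta} is exactly in that spirit.

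Your reverse direction, however, contains a conceptual slip. Iterating $\delta$ does \emph{not} reduce modulo $p$ to iterated Frobenius: it is $\phi_p(x)=x^p+p\delta(x)$ that lifts Frobenius, so $\phi_p\equiv F \pmod p$, whereas $\delta$ itself has no clean description modulo $p$. The role of $\delta$ in the argument is rather that of a ``divided $p$-power'' operator which lowers $p$-adic order by one; this is what allows mixed operators $\delta^a\circ\partial$ to detect monomials $p^a x^\alpha$ in Cohen coordinates. Your large-scale plan (localize at $Q$, complete, invoke Cohen's structure theorem to get explicit regular parameters including $p$) is indeed how \cite{ZNmixed} proceeds, and the present paper cites \cite[Lemma 3.20 and Proposition 3.22]{ZNmixed} for the compatibility of mixed differential powers with this passage. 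But the mechanism you sketch for producing the witnessing operator, via lifting a characteristic-$p$ differential operator through Frobenius, would not work as stated; the correct argument works directly in the Cohen coordinate ring using the explicit divided-power differentials in the $x_i$ together with iterates of $\delta$ to strip off powers of $p$.
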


\begin{setting}\label{first setting}
Let $A$ be either $\ZZ$ or a discrete valuation ring with uniformizer $p$ with a $p$-derivation. We will let $S=A[x_1,\ldots,x_m]$ be a graded polynomial ring over $A$, with $\deg(x_i)>0$. Let $\delta$ be a $p$-derivation on $S$, which exists by \cite[Proposition 2.7]{ZNmixed}. Given homogeneous elements $f_1,\ldots,f_t$, let $R=A[f_1,\ldots,f_t]$ be the graded subring of $S$ generated by such elements.
\end{setting}

Setting \ref{first setting} applies, for instance, if $A$ is a complete discrete valuation ring; see \cite[Proposition 2.7]{ZNmixed}.

\begin{definition} 
Let $R \subseteq S$ be an inclusion of graded rings. We say that $R$ is a graded direct summand of $S$ if there exists a degree-preserving $R$-linear map $\beta\!:S \longrightarrow R$ that splits the inclusion of $R$ into $S$. 
\end{definition}

We will use bars over objects to denote residue classes modulo $p$; for instance, $\ov{R}$ will denote $R/(p)$.

\begin{definition} \label{Defn Dq}
Assume Setting \ref{first setting}, and that $\ov{R}$ is a graded direct summand of $\ov{S}$, with graded splitting $\ov{\beta}:\ov{S} \to \ov{R}$. Given an ideal $\a \subseteq R$, and $n \in \NN$, we let 
\[
\a\Dq{n} \colonequals \{x \in R \mid \ov{\beta}(\delta^a(\partial(x)) \ov{S}) \subseteq \ov{\a} \text{ for all } \partial \in D^b_{S|A} \text{ with } a,b \geq 0 \text{ and } a+b < n\}.
\]
\end{definition}

Observe that $a\Dq{0}=R$ and $\a\Dq{1} = \a+(p)$. It follows directly from the definition that $\a\Dq{n+1} \subseteq \a\Dq{n}$ for all $n \geq 1$. 

\begin{remark}
We do not know whether under our assumptions one can always find a $p$-derivation on $S$ that restricts to $R$. If this is the case, then one could also consider the mixed differential powers $\a\difM{n}$ on $R$. See Section \ref{Section comparison} for a comparison between these two types of powers, and related discussions.
\end{remark}

We now show that the differential powers $\a\Dq{n}$ have desirable properties, which resemble those of mixed differential powers as presented in \cite{ZNmixed}.

\begin{proposition} \label{Proposition Dq}
Assume Setting \ref{first setting}, and that $\ov{R}$ is a graded direct summand of $\ov{S}$, with graded splitting $\ov{\beta}:\ov{S} \to \ov{R}$.
Let $\a$ be an ideal of $R$ containing $p$. For all $n \geq 1$, $\a\Dq{n}$ is an ideal. Moreover, if $Q$ is a prime ideal containing $p$, then $Q\Dq{n}$ is $Q$-primary, and it contains~$Q^{(n)}$.
\end{proposition}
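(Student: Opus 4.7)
The plan is to prove the three assertions---that $\a\Dq{n}$ is an ideal, that $Q\Dq{n}$ is $Q$-primary, and that $Q^{(n)} \subseteq Q\Dq{n}$---with Lemma \ref{Lemma Prop delta} as the main computational engine and the $\ov{R}$-linearity of $\ov{\beta}$ as the algebraic glue. Before starting I would isolate one auxiliary fact that streamlines all later bookkeeping: the subset $J_{\ov{\a}} := \{w \in \ov{S} \mid \ov{\beta}(w\ov{S}) \subseteq \ov{\a}\}$ is an ideal of $\ov{S}$ that contains $\ov{\a}\ov{S}$. Both assertions are immediate from $\ov{R}$-linearity of $\ov{\beta}$: closure under $\ov{S}$-multiplication uses $\ov{\beta}(wt\ov{s}) = \ov{\beta}(w \cdot t\ov{s})$, and the containment uses $\ov{\beta}(\ov{\a}\ov{S}) \subseteq \ov{\a}\,\ov{\beta}(\ov{S}) = \ov{\a}$. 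The point is that ``$\ov{\beta}(\delta^a(\partial(x))\ov{S}) \subseteq \ov{\a}$'' becomes ``$\delta^a(\partial(x)) \bmod p \in J_{\ov{\a}}$'', and the $\ov{S}$-ideal property absorbs all the $S$-multiples in the error terms produced by Lemma \ref{Lemma Prop delta}.

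With this in hand, showing that $\a\Dq{n}$ is an ideal is quick: closure under addition follows from Lemma \ref{Lemma Prop delta}(1), since the correction $\delta^a(\partial(x+y)) - \delta^a(\partial(x))$ lies in an $S$-ideal whose generators $\delta^j(\partial(y))$ ($j \leq a$) reduce to $J_{\ov{\a}}$ by $y \in \a\Dq{n}$; and closure under multiplication by $r \in R$ follows by applying the defining condition of $\a\Dq{n}$ to the operator $\partial \circ m_r \in D^b_{S|A}$ (using that $m_r \in D^0_{S|A}$).

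The $Q$-primary property is proved by induction on $n$, starting from $Q\Dq{1} = Q + (p) = Q$. For the inductive step, given $xy \in Q\Dq{n}$ with $y \notin Q$, the inductive $Q$-primarity of $Q\Dq{n-1}$ combined with $xy \in Q\Dq{n} \subseteq Q\Dq{n-1}$ yields $x \in Q\Dq{n-1}$; it remains to upgrade this to $x \in Q\Dq{n}$. Fix $a + b = n - 1$ and $\partial \in D^b_{S|A}$ (the cases $a+b < n-1$ are covered by $x \in Q\Dq{n-1}$), decompose $\partial(xy) = y\partial(x) + [\partial, m_y](x)$ with $[\partial, m_y] \in D^{b-1}_{S|A}$, and apply $\delta^a$ using Lemma \ref{Lemma Prop delta}(1) and (3) to write
\[
\delta^a(\partial(xy)) = y^{p^a} \delta^a(\partial(x)) + E,
\]
where $E$ lies in the $S$-ideal generated by $\delta^j(\partial(x))$ for $j < a$, by $p\,\delta^a(\partial(x))$, and by $\delta^j([\partial, m_y](x))$ for $j \leq a$. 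A routine check of index inequalities shows that each such generator reduces mod $p$ to an element of $J_{\ov{Q}}$ (invoking $x \in Q\Dq{n-1}$), while the $p$-multiple vanishes mod $p$. Therefore $\ov{\beta}(y^{p^a} \delta^a(\partial(x))\ov{S}) \subseteq \ov{Q}$; pulling $\ov{y}^{p^a}$ out of $\ov{\beta}$ by $\ov{R}$-linearity and using primality of $\ov{Q}$ together with $\ov{y} \notin \ov{Q}$ yields $\ov{\beta}(\delta^a(\partial(x))\ov{S}) \subseteq \ov{Q}$, i.e., $x \in Q\Dq{n}$.

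For $Q^{(n)} \subseteq Q\Dq{n}$, the same technique yields the stronger $Q^n \subseteq Q\Dq{n}$ by induction: with $Q\Dq{n}$ now known to be an ideal, it suffices to verify $qy \in Q\Dq{n}$ for $q \in Q$ and $y \in Q^{n-1} \subseteq Q\Dq{n-1}$, and the analogous expansion $\delta^a(\partial(qy)) = q^{p^a} \delta^a(\partial(y)) + E'$ is handled essentially as before, except the leading term now lies in $\ov{Q}\ov{S} \subseteq J_{\ov{Q}}$ because $q^{p^a} \in Q$. From $Q^n \subseteq Q\Dq{n} \subseteq Q$ one gets $\sqrt{Q\Dq{n}} = Q$, so combined with the primary property $Q\Dq{n}$ is $Q$-primary; then for $x \in Q^{(n)}$ any $s \in R \setminus Q$ with $sx \in Q^n \subseteq Q\Dq{n}$ forces $x \in Q\Dq{n}$. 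The main obstacle throughout is the simultaneous bookkeeping of Leibniz expansion for $\partial$ and the $p$-derivation rules of Lemma \ref{Lemma Prop delta} for $\delta$, but once $J_{\ov{Q}}$ is formalized as an ideal of $\ov{S}$ this reduces to routine index chasing.
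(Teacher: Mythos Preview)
Your proof is correct and follows essentially the same route as the paper's: Lemma~\ref{Lemma Prop delta} controls $\delta^a\circ\partial$ on sums and products, and one inducts on $n$ to obtain the ideal property, the multiplicative (primary) property, and the containment $Q^n\subseteq Q\Dq{n}$. Two minor differences are worth noting: your closure under $R$-multiplication via $\partial\circ m_r\in D^b_{S|A}$ is slicker than the paper's Leibniz expansion $\partial(rx)=r\partial(x)+[\partial,r](x)$ combined with Lemma~\ref{Lemma Prop delta}(1),(3); and you establish $Q^n\subseteq Q\Dq{n}$ by a self-contained induction, whereas the paper imports the inclusion $\delta^s(\partial(Q^nS))\subseteq QS$ from \cite[Propositions~3.2 and~3.13]{ZNmixed}.
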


\begin{proof}
We start with the first claim, proceeding by induction on $n \geq 1$. Since $\a\Dq{1} = \a$ the base case is trivial. 

Let $x,y \in \a\Dq{n}$. For $0\leq s+t<n$ we consider $\delta^s(\partial(x+y))$, where $\partial \in D^t_{S|\ZZ}$. Since $\partial(x+y)=\partial(x)+\partial(y)$, by Lemma \ref{Lemma Prop delta} (1) we have
\[
\delta^s(\partial(x+y)) \in (\delta^s(\partial(x)),\partial(y),\delta(\partial(y)),\ldots,\delta^s(\partial(y)))S.
\]
Thus
\[
\ov{\beta}(\delta^s(\partial(x+y)) \ov{S}) \subseteq \left(\ov{\beta}(\delta^s(\partial(x))\ov{S}),\ov{\beta}(\delta^j(\partial(x)) \cdot S) \mid 0 \leq j \leq s \right) \subseteq \ov{\a}.
\]
This shows that $x+y \in \a\Dq{n}$ for all $x, y \in \a\Dq{n}$.

Now let $r \in R$ and $x \in \a\Dq{n}$. Again, for $0 \leq s+t < n$ we consider $\delta^s(\partial(rx))$. Recall that $\partial(rx) = r\partial(x) + \partial'(x)$, where $\partial' = [\partial,r] \in D^{t-1}_{S|\ZZ}$. By Lemma \ref{Lemma Prop delta} (1) we have that 
\[
\delta^s(\partial(rx)) \in (\delta^s(r\partial(x)),\partial'(x),\delta(\partial'(x)),\ldots,\delta^s(\partial'(x))S.
\]
By Lemma \ref{Lemma Prop delta} (3) we also have $\delta^s(r\partial(x)) \in (\partial(x),\delta(\partial(x)),\ldots,\delta^{s}(\partial(x))S$. It follows that $\delta^s(\partial(rx)) \in \left(\delta^i(\partial(x)),\delta^j(\partial'(x)) \mid 0 \leq i,j \leq s\right)S$. Thus, $\ov{\beta}(\delta^s(\partial(rx))\ov{S}) \subseteq \ov{\a}$, which shows that $rx \in \a\Dq{n}$. We conclude that $\a\Dq{n}$ is an ideal.

Now let $Q$ be a prime containing $p$. We first show that $Q^n \subseteq Q\Dq{n}$. To this end, it was proven in \cite[Propositions 3.2 and 3.13]{ZNmixed} that $\delta^s(\partial(Q^nS)) \subseteq QS$ for all $\partial \in D^t_{S|\ZZ}$ with $0 \leq s+t<n$. Thus $\ov{\beta}(\delta^s(\partial(x))\ov{S}) \subseteq \ov{Q}$ for all $x \in Q^n$, and we conclude that $Q^n \subseteq Q\Dq{n}$.

Finally, we prove by induction on $n \geq 1$ that $Q\Dq{n}$ is $Q$-primary. The base case is clear: $Q\Dq{1} = Q$. Note that the radical of $Q\Dq{n}$ is $Q$, since $Q^n \subseteq Q\Dq{n} \subseteq Q$. Now let $x,y \in R$ be such that $xy \in Q\Dq{n}$, and $x \notin Q$. Let $s,t$ be nonnegative integers such that $s+t < n$, and let $\partial \in D^t_{S|\ZZ}$. Recall that $x\partial(y) = \partial(xy) + \partial'(y)$, where $\partial' = -[\partial,x] \in D^{t-1}_{S|\ZZ}$. Observe that $xy \in Q\Dq{n} \subseteq Q\Dq{n-1}$ implies, by induction, that $y \in Q\Dq{n-1}$. In particular, $\ov{\beta}(\delta^j(\partial'(y))\ov{S}) \subseteq \ov{Q}$ for all $0 \leq j \leq s$. By Lemma \ref{Lemma Prop delta} (1) we have
\[
\delta^s(x\partial(y)) \in (\delta^s(\partial'(y)), \partial(xy),\delta(\partial(xy)),\ldots,\delta^s(\partial(xy)))S.
\]
We conclude that $\ov{\beta}(\delta^s(x\partial(y))\ov{S}) \subseteq \ov{Q}$. By Lemma \ref{Lemma Prop delta} (3) we have $\delta^s(x\partial(y)) = x^{p^s} \delta^s(\partial(y)) + z$, where $z \in (\partial(y),\delta(\partial(y)),\ldots,\delta^{s-1}(\partial(y)),p)S$. Thus, $\ov{\beta}(z\ov{S}) \subseteq \ov{Q}$. It follows that 
\[
\ov{\beta}(x^{p^s}\delta^s(\partial(y))\ov{S}) = (x^{p^s}) \cdot \ov{\beta}(\delta^s(\partial(y))\ov{S}) \subseteq \ov{Q},
\]
where we use that $x^{p^s} \in R$. Since $x \notin Q$, we conclude that $\ov{\beta}(\delta^s(\partial(y))\ov{S}) \subseteq \ov{Q}$. As $\partial \in D^t_{S|\ZZ}$ and $s,t$ with $s+t < n$ were arbitrary, we conclude that $y \in Q\Dq{n}$.

Given that $Q\Dq{n}$ is $Q$-primary and $Q\Dq{n} \supseteq Q^n$, we conclude that $Q\Dq{n}$ contains $Q^{(n)}$.
\end{proof}

\section{A uniform Chevalley Theorem} \label{Section main}

\begin{notation}\label{notation}
Assume Setting \ref{first setting}. Let $\q = (f_1,\ldots,f_t)R$, $\eta=(x_1,\ldots,x_m)S$, $\m = \q+pR$ and $\n = \eta + (p)$.
\end{notation}

Observe that $\eta^{nD} = \eta^{(nD)} =  \eta^{\langle nD \rangle_A}$ and $\n^{nD} = \n^{(nD)} = \n^{\langle nD \rangle_{\rm mix}}$, where in both cases the first equality is because $\eta$ and $\n$ are generated by a regular sequence, and the second equality follows from \cite[Theorem A and Theorem B]{ZNmixed}.

We are now ready to state the main results concerning a uniform Chevalley theorem for symbolic powers in mixed characteristic. We start with the case of homogeneous primes that do not contain prime integers, for which the proof is similar to that of \cite[Theorem 3.27]{SurveySP}, and does not require the use of the new differential powers introduced in this article.

\begin{theorem} \label{THM no p}
Assume Setting \ref{first setting}, and that $R$ is a graded direct summand of $S$. Using Notation \ref{notation}, let $D=\max\{\deg(f_1),\ldots,\deg(f_t)\}$. Then $Q^{(nD)} \subseteq \q^n$ for all $n \geq 1$ and all prime ideals $Q \subseteq \q$. 
\end{theorem}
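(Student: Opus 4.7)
The plan is to treat the homogeneous case first: assume $Q$ is a homogeneous prime, and reduce to showing that every nonzero homogeneous $f \in Q^{(nD)}$ has $\deg(f) \geq nD$. The containment $f \in \q^n$ will then follow from a graded representation argument. Concretely, any homogeneous $f \in R$ of degree $d$ lifts to a homogeneous polynomial $F \in A[y_1,\ldots,y_t]$ (with $\deg(y_i) = \deg(f_i)$) of degree $d$ satisfying $F(f_1,\ldots,f_t) = f$. Each monomial $y^\alpha$ of $F$ satisfies $\sum_i \alpha_i \deg(f_i) = d$, and since $\deg(f_i) \leq D$ this forces $|\alpha| \geq d/D$; when $d \geq nD$ we obtain $|\alpha| \geq n$ for every such term, so $f = \sum_\alpha c_\alpha f^\alpha \in \q^n$.

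To establish the degree bound, I transport the problem to $S$. Because $R \subseteq S$ is pure (as a graded direct summand), localizing at $R \setminus Q$ keeps $QS_Q$ proper, and so there exists a minimal prime $P$ of $QS$ in $S$ with $P \cap R = Q$. Since $QS$ is homogeneous, $P$ is homogeneous; and since $p \notin \q \supseteq Q$, also $p \notin P$. Because $A$ is a DVR with maximal ideal $(p)$, any homogeneous prime of $S = A[x_1,\ldots,x_m]$ avoiding $p$ meets $A$ only at $0$, and therefore lies in $\eta = (x_1,\ldots,x_m)S$; in particular $P \subseteq \eta$. For any $f \in Q^{(nD)}$, pick $s \in R \setminus Q$ with $sf \in Q^{nD} \subseteq P^{nD}$; then $s \notin P$ yields $f \in P^{(nD)}$. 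By \cite[Theorem A]{ZNmixed} applied to the homogeneous prime $P$ not containing $p$, we have $P^{(nD)} = P^{\langle nD \rangle_A}$, and consequently $\partial(f) \in P \subseteq \eta$ for every $\partial \in D^{nD-1}_{S|A}$.

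Now suppose $f$ is homogeneous of degree $d < nD$ and nonzero. Pick a monomial $x^\beta$ appearing in $f$ with coefficient $c_\beta \in A \setminus \{0\}$. Since $\deg(x_i) \geq 1$, $|\beta| \leq \sum_i \beta_i \deg(x_i) = d < nD$, so the divided power operator $\partial^{(\beta)} = \tfrac{1}{\beta!}\partial_x^\beta$ lies in $D^{|\beta|}_{S|A} \subseteq D^{nD-1}_{S|A}$. However $\partial^{(\beta)}(f) = c_\beta + g$ for some $g \in \eta$, and then $\partial^{(\beta)}(f) \in \eta$ forces $c_\beta \in \eta \cap A = 0$, a contradiction. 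Hence every nonzero homogeneous element of $Q^{(nD)}$ has degree at least $nD$, and the theorem follows. The main technical obstacle is locating the auxiliary prime $P$ with the required contraction and containment $P \subseteq \eta$; once these are in hand, Zariski--Nagata in $S$ and the standard divided power operators finish the proof, paralleling the perfect residue field case \cite[Theorem 3.27]{SurveySP}, and no $p$-derivations or new differential powers $\a\Dp{n}$ are required.
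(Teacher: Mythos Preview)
Your argument is correct for homogeneous primes $Q$, but the theorem as stated covers \emph{all} primes $Q \subseteq \q$, and you never return to the non-homogeneous case after announcing you will treat it ``first.'' The passage through an auxiliary prime $P$ of $S$ lying over $Q$ genuinely depends on $Q$ being homogeneous: you need $QS$ homogeneous to force a minimal prime $P$ to be homogeneous, and you need $P$ homogeneous (together with $P\cap A=0$) to conclude $P\subseteq\eta$. For a non-homogeneous $Q\subseteq\q$ there is no reason a minimal prime of $QS$ contracting to $Q$ should sit inside $\eta$ (think of primes like $(x-a)$ in a polynomial ring over $A$), so the chain $\partial(f)\in P\subseteq\eta$ breaks and you cannot run the divided-power contradiction.

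The paper avoids this detour entirely. Rather than lifting $Q$ to a prime of $S$, it pushes differential operators from $S$ down to $R$ using the splitting: for $\partial\in D^{nD-1}_{S|A}$, the composite $(\beta\circ\partial)_{|R}$ lies in $D^{nD-1}_{R|A}$. If $f\in R\smallsetminus\eta^{nD}$, one finds $\partial$ with $\partial(f)=a+g$, $a\in A\smallsetminus\{0\}$, $g\in\eta$; then $\beta(\partial(f))=a+\beta(g)\notin\q$ because $\beta$ is graded and hence $\beta(\eta)\subseteq\q$. Since $Q\subseteq\q$, this witnesses $f\notin Q^{\langle nD\rangle_A}\supseteq Q^{(nD)}$, giving $Q^{(nD)}\subseteq R\cap\eta^{nD}$ for \emph{every} prime $Q\subseteq\q$ with no homogeneity hypothesis. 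The second half of your argument---that any homogeneous $g\in R\cap\eta^{nD}$ lies in $\q^n$ via the estimate $|\alpha|\geq d/D\geq n$---matches the paper's and is fine; the point is that $R\cap\eta^{nD}$ is itself homogeneous, so one may reduce to homogeneous $g$ there rather than at the level of $Q^{(nD)}$.
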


\begin{proof}
Let $\beta\!:S \to R$ be a splitting of the inclusion. First we show that $Q^{(nD)} \subseteq R \cap \eta^{nD}$. Consider an element $f \in R$ that does not belong to $\eta^{nD}$. By \cite[Theorem A]{ZNmixed}, there exists a differential operator $\partial \in D^{n-1}_{S|A}$ such that $\partial(f) \notin \eta$, that is, $\partial(f) = a + g$ where $a \in A \smallsetminus \{0\}$ and $g \in \eta$. Since $\beta$ is graded and $A$-linear, we have that $(\beta \circ \partial)_{|R} \in D^{n-1}_{R|A}$ (see \cite[Lemma 3.1]{AMHNB}), and $\beta(\partial(f)) =a+\beta(g) \notin \q$, since $\beta(g) \in \beta(\eta) \subseteq \q$. Since $Q \subseteq \q$, it follows that $\beta(\partial(f)) \notin Q$, so $f \notin Q^{\langle nD \rangle_A}$, and therefore $f \notin Q^{(nD)}$ by \cite[Proposition 3.2 (5)]{ZNmixed}.

Now we show that $R \cap \eta^{nD} \subseteq \q^n$, so that $Q^{(nD)} \subseteq \q^n$ will follow. To show the containment, we let $g \in R \cap \eta^{nD}$ be an element, which we may assume to be homogeneous with respect to the grading of $S$. If we let $d=\deg(g)$, then note that $d \geq nD$. Since $g \in R$, we may write $g=\sum_{\alpha} c_\alpha f_1^{\alpha_1} \cdots f_t^{\alpha_t}$ for some $c_\alpha \in A$. Let $\alpha = \min\{\alpha_1 + \cdots + \alpha_t \mid c_\alpha \ne 0\}$, so that $d= \sum_{i=1}^t \alpha_i\deg(f_i) \leq (\alpha_1 + \cdots + \alpha_t)D$, from which we get $\alpha \geq d/D \geq n$. Since $g \in \q^\alpha$, the proof is complete.
\end{proof}

For primes that do contain a prime integer $p$, on the other hand, the use of the new type of mixed differential powers introduced in this article (Definition \ref{Defn Dq}) will be crucial. 

\begin{theorem} \label{main THM}
Assume Setting \ref{first setting}, and that $\ov{R}$ is a graded direct summand of $\ov{S}$. Using Notation \ref{notation}, let $D=\max\{\deg(f_1),\ldots,\deg(f_t)\}$. Then $Q^{(nD)} \subseteq \m^n$ for all $n \geq 1$ and all prime ideals $Q \subseteq \m$ that contain $p$.
\end{theorem}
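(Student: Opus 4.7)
The plan is to prove the containment chain
\[
Q^{(nD)} \subseteq Q\Dq{nD} \subseteq R \cap \n^{nD} \subseteq \m^n,
\]
where the first inclusion is supplied by Proposition~\ref{Proposition Dq}. The remaining work splits into two steps: an argument at the level of $S$ that transports differential information to $\ov{R}$ via the graded splitting $\ov{\beta}$, followed by a degree-sensitive computation inside $R$.

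For the middle inclusion I would take $f \in R$ with $f \notin \n^{nD}$ and exhibit a differential operator certifying $f \notin Q\Dq{nD}$. Since $\n$ is generated by a regular sequence and, by \cite[Theorem B]{ZNmixed}, $\n^{nD} = \n\difM{nD}$, there exist $a, b \geq 0$ with $a + b < nD$ and $\partial \in D^b_{S|A}$ such that $\delta^a(\partial(f)) \notin \n$. I would write $\delta^a(\partial(f)) = c + h$ with $c \in A \setminus pA$ and $h \in \n = \eta + pS$, and apply $\ov{\beta}$. Using that $\ov{\beta}$ is $\ov{R}$-linear and restricts to the identity on $\ov{R}$, one obtains $\ov{\beta}(\overline{\delta^a(\partial(f))}) = \ov{c} + \ov{\beta}(\ov{h})$. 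Since $\ov{h} \in \ov{\eta}$ has positive degree and $\ov{\beta}$ preserves degree, $\ov{\beta}(\ov{h})$ lies in $\ov{R}_{>0} = \ov{\q}$. A quick check using $\ov{\q} \cap \ov{A} = 0$ and the fact that $\ov{c}$ is a unit in the field $\ov{A}$ forces $\ov{c} + \ov{\beta}(\ov{h}) \notin \ov{\q} = \ov{\m} \supseteq \ov{Q}$, so $\ov{\beta}(\delta^a(\partial(f))\ov{S})$ cannot be contained in $\ov{Q}$, and therefore $f \notin Q\Dq{nD}$.

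For the third inclusion I would decompose $f \in R \cap \n^{nD}$ into homogeneous components $f = \sum_d f_d$, so that $f_d \in R_d \cap (\n^{nD})_d$, and handle each piece separately. For $d \geq nD$, the degree argument from the proof of Theorem~\ref{THM no p} already puts $f_d$ in $\q^n \subseteq \m^n$. For $d < nD$, I would note that every $S$-generator $x^\mu p^j$ of $\n^{nD}$ contributing to degree $d$ satisfies $|\mu| \leq d$ and hence $j \geq nD - d$, so $(\n^{nD})_d \subseteq p^{nD-d}S$. The hypothesis that $\ov{R}$ is a direct summand of $\ov{S}$ makes $R/pR \hookrightarrow S/pS$ injective, yielding $R \cap pS = pR$ and, after iteration, $R \cap p^k S = p^k R$ for all $k \geq 0$. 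Hence $f_d = p^{nD - d}h_d$ with $h_d \in R_d \subseteq \q^{\lceil d/D \rceil}$, placing $f_d$ inside $\m^{(nD - d) + \lceil d/D \rceil} \subseteq \m^n$ once one checks the elementary inequality $(nD - d) + \lceil d/D \rceil \geq n$, which holds for $0 \leq d \leq nD$.

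The hard step is the middle inclusion: with no $p$-derivation available on $R$, there is no direct analog of the mixed Zariski--Nagata theorem for $R$, so the graded splitting $\ov{\beta}$ has to serve as the bridge that pushes mixed differential data from $S$ down to $\ov{R}$. The novelty of the power $Q\Dq{n}$ is precisely that it encodes this pushdown, and the key point in Step~1 is that the unit $\ov{c}$ survives modulo $\ov{Q}$ after the $\ov{\eta}$-part is absorbed into $\ov{\q}$. The last step is more routine, but hinges on the two observations that $(\n^{nD})_d \subseteq p^{nD - d}S$ and $R \cap p^kS = p^kR$, which together convert degree-$d$ membership in $\n^{nD}$ into divisibility by a sufficiently high power of $p$ inside $R$.
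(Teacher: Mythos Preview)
Your proposal is correct and follows essentially the same route as the paper: the chain $Q^{(nD)} \subseteq Q\Dq{nD} \subseteq R\cap \n^{nD} \subseteq \m^n$, with the middle inclusion obtained from \cite[Theorem~B]{ZNmixed} together with the graded splitting $\ov\beta$, and the last inclusion by a degree/$p$-divisibility count. Your organization of the last step (decompose $f$ by degree, use $(\n^{nD})_d\subseteq p^{nD-d}S$ and $R\cap p^kS=p^kR$, then $R_d\subseteq \q^{\lceil d/D\rceil}$) is a slight repackaging of the paper's argument, which instead factors out the maximal power of $p$ first and then bounds the degree of the quotient; the content is the same.
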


\begin{proof}
Let $\ov{\beta}\!:\ov{S} \to \ov{R}$ be a graded splitting of the inclusion $\ov{R} \subseteq \ov{S}$. Let $f \in R$ be an element that does not belong to $\n^{nD}$. By \cite[Theorem B]{ZNmixed} there exist integers $s,t$ with $s+t \leq nD-1$ and a differential operator $\partial \in D^t_{S|\ZZ}$ such that $\delta^s \circ \partial(f) \notin \n$. This means that we can write $\delta^s \circ \partial(f) = a+g$, where $g \in \eta$ and $a \in A$ is such that $(a,p) = A$. It follows that $\ov{\beta}(\delta^s(\partial(f))) = \ov{a}+\ov{\beta}(\ov{g}) \notin \ov{\m}$, since $\ov{\beta}(\ov{g}) \in \ov{\beta}(\ov{\eta}) \subseteq \ov{\q} \subseteq \ov{\m}$ and $\ov{a} \notin \ov{\m}$. Since $Q \subseteq \m$, we conclude that $\ov{\beta}(\delta^s(\partial(f)))  \notin \ov{Q}$. It follows that $f \notin Q\Dq{nD}$ and, by Proposition \ref{Proposition Dq}, we have that $f \notin Q^{(nD)}$. This shows that $Q^{(nD)} \subseteq R \cap \n^{Dn}$.

We now claim that $R \cap \n^{nD} \subseteq \m^{n}$, which will complete the proof. In order to prove this claim, we let $g \in \n^{nD} \cap R$ be homogeneous with respect to the grading in $S$. Since $g \in \n^{nD}$, there exists an integer $0 \leq d \leq nD$ such that $g = p^{nD-d} \widetilde{g}$ for some $\widetilde{g} \in S \smallsetminus pS$. If $d=0$ there is nothing to show, since clearly $g \in \m^n$ in this case. In what follows, let us assume otherwise. Since $\ov{R}$ is a direct summand of $\ov{S}$, and $p$ is a regular element on $S$, we have that $(p^{nD-d})S \cap R = (p^{nD-d})R$, and therefore we may assume that $\widetilde{g} \in R$. We can then write
$$\widetilde{g} = \sum_{\underline{\alpha}} c_{\underline{\alpha}} f_1^{\alpha_1} \cdots f_t^{\alpha_t}$$
for some $c_{\underline{\alpha}} \in A$. Let $\alpha = \min\{\alpha_1 + \cdots + \alpha_t \mid \underline{\alpha} = (\alpha_1,\ldots,\alpha_t), c_{\underline{\alpha}} \ne 0\}$, so that $\widetilde{g} \in \q^\alpha$. Note that $\widetilde{g} \in (\n^{nD}:_Sp^{nD-d}) \subseteq \n^d$, and that $\widetilde{g}$ must actually have a monomial in $\eta^d$ with nonzero coefficient, otherwise it would belong to $pS$. Since $\widetilde{g}$ is homogeneous, it follows that $\deg(\widetilde{g}) \geq d$. Moreover, $d=\alpha_1 \deg(f_1) + \cdots + \alpha_t \deg(f_t) \leq (\alpha_1+\cdots+\alpha_t) D$ for every $\underline{\alpha} = (\alpha_1,\ldots,\alpha_t)$ such that $c_{\underline{\alpha}} \ne 0$, and therefore $\alpha \geq d/D$. We conclude that $g  = p^{nD-d}\widetilde{g} \in (p)^{nD-d} \q^\alpha \subseteq  \m^{n}$, where the last containment follows from the fact that $nD-d+\alpha \geq nD-d+d/D \geq n$, using that $nD \geq d$.
\end{proof} 

\begin{example} \label{Example sharp}
Let $A=\ZZ_p$ be the ring of $p$-adic integers, and $S=A[s,t]$. Let $R$ be the ring of invariants under the action of $\ZZ/(2) \cong \{-1,1\}$ that sends $s \mapsto \pm s$ and $t \mapsto \pm t$. It can be shown that $R = A[s^2,st,t^2] \cong A[x,y,z]/(xz-y^2)$, so that $D=2$ in this case. If we let $Q=(x,y)$, then $Q^{(2n)} = (x^n)$ for all $n \geq 1$, therefore the containments of Theorem \ref{THM no p} and \ref{main THM} are sharp for this ring.
\end{example}

\begin{remark} \label{Remark direct summand mod p}
The assumption of Theorem \ref{main THM} that $\ov{R}$ is a (graded) direct summand of $\ov{S}$ is potentially weaker than the condition of Theorem \ref{THM no p} that $R$ is a (graded) direct summand of $S$. For instance, Jeffries and Singh prove that certain determinantal rings are not direct summands of any regular local ring in mixed characteristic \cite[Corollary 1.5]{JS}. The validity of the corresponding statement in characteristic $p>0$, however, is still unknown.
\end{remark}

\begin{corollary} \label{main Coroll} 
Let $A$ be a discrete valuation ring with uniformizer $p$ and perfect residue field, and assume that $A$ has a $p$-derivation. If $R=A[f_1,\ldots,f_t]$ is a graded direct summand of $S=A[x_1,\ldots,x_m]$, $\m = (p, f_1, \ldots, f_t)$, and $D=\max\{\deg(f_1),\ldots,\deg(f_t)\}$, then $Q^{(nD)} \subseteq \m^n$ for all $n\geq 1$ and all homogeneous prime ideals $Q \subseteq R$.
\end{corollary}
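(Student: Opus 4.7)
The plan is to deduce the corollary from the two main theorems of the section by performing a case analysis on whether or not $p \in Q$. First I would observe that the hypotheses of Setting \ref{first setting} are satisfied by assumption, and that a graded $R$-linear splitting $\beta\!:S \to R$ of the inclusion descends, modulo $p$, to a graded $\ov{R}$-linear splitting $\ov{\beta}\!:\ov{S} \to \ov{R}$ of the inclusion $\ov{R} \subseteq \ov{S}$. (This uses only that $\beta$ is $A$-linear and degree-preserving, so it sends $pS$ into $pR$.) In particular, both the hypothesis of Theorem \ref{THM no p} and the hypothesis of Theorem \ref{main THM} hold in the setting of the corollary.

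Next I would use the grading to control the location of a homogeneous prime $Q$ relative to $\m$. Since the $f_i$ have positive degree, $R$ is nonnegatively graded with $R_0 = A$ and $R_{>0} = (f_1,\ldots,f_t)R = \q$; hence $\m = \q + pR$ is the unique homogeneous maximal ideal, and every homogeneous prime $Q$ satisfies $Q \subseteq \m$. Moreover, for a homogeneous prime $Q$, the intersection $Q \cap A$ is a prime of $A$, so either $p \in Q$ or $Q \cap A = (0)$; in the latter case every homogeneous generator of $Q$ has positive degree and therefore lies in $\q$, so that $Q \subseteq \q$.

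With this dichotomy in hand, the proof splits cleanly. If $p \in Q$, then $Q \subseteq \m$ with $p \in Q$, and Theorem \ref{main THM} gives directly $Q^{(nD)} \subseteq \m^n$ for every $n \geq 1$. If $p \notin Q$, then by the previous paragraph $Q \subseteq \q$, and Theorem \ref{THM no p} gives $Q^{(nD)} \subseteq \q^n \subseteq \m^n$ for every $n \geq 1$. Combining the two cases yields the desired containment for all homogeneous primes $Q \subseteq R$.

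There is no serious obstacle beyond bookkeeping: the technical content (the construction and properties of the differential powers $\Dq{n}$, and the degree estimate separating $R \cap \eta^{nD}$ from $\m^n$) is already absorbed into Theorems \ref{THM no p} and \ref{main THM}. The only mild subtlety is ensuring that the splitting descends to $\ov{\beta}$, which is automatic from the $A$-linearity of $\beta$, and the observation that $\q = R_{>0}$, which is a direct consequence of the positivity of the degrees of the $f_i$.
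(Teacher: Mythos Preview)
Your proposal is correct and follows essentially the same approach as the paper: a case split on whether $Q\cap A=(0)$ or $Q\cap A=(p)$, invoking Theorem~\ref{THM no p} in the first case and Theorem~\ref{main THM} in the second, after noting that the graded splitting of $R\subseteq S$ descends modulo $p$. The paper's proof is terser but the logical structure is identical.
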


\begin{proof}
Let $Q \subseteq R$ be a homogeneous prime ideal. If $Q \cap A = (0)$, then $Q$ must be contained in the positive degree part of $R$, that is, we must have $Q \subseteq \q$. By Theorem \ref{THM no p} we then have that $Q^{(nD)} \subseteq \q^n \subseteq \m^n$ for all $n \geq 1$. On the other hand, if $Q \cap A = (p)$, then $Q^{(nD)} \subseteq \m^n$ follows from Theorem \ref{main THM}, as our assumptions guarantee that $\ov{R}$ is a graded direct summand of $\ov{S}$.
\end{proof}

As a consequence of our results, and of several improvements of the famous Noether bound on the degree of generators of rings of invariants, we obtain the following.

\begin{theorem} \label{THM Noether bound} Let $p$ be a prime integer, and $A$ be a discrete valuation ring with uniformizer $p$ and perfect residue field, and assume that $A$ has a $p$-derivation. Let $G$ be a finite group of order $D$ coprime with $p$ acting on $S=A[x_1,\ldots,x_n]$, where $\deg(x_i)=1$ for all $i$. Let $R=S^{G}$ be the ring of invariants, and $\m = R_{>0} + pR$. Then $Q^{(nD)} \subseteq \m^n$ for every homogeneous prime ideal $Q \subseteq R$ and every integer $n$.
\end{theorem}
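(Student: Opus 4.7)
The plan is to reduce this statement directly to Corollary \ref{main Coroll}. For that corollary to apply, I need two ingredients: that $R=S^G$ is a graded direct summand of $S$, and that $R$ admits a generating set of homogeneous elements over $A$ whose degrees are bounded by $D=|G|$.

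First, I would construct the splitting via the Reynolds operator $\rho\!:S \to R$ defined by $\rho(s)=\frac{1}{|G|}\sum_{g\in G} g(s)$. Since $|G|=D$ is coprime to $p$ and $A$ is a DVR with uniformizer $p$, the element $D$ is a unit in $A$, so $\rho$ is well-defined, $A$-linear, and $R$-linear. It is also degree-preserving because $G$ acts by degree-preserving automorphisms on $S$. Since $\rho|_R = \mathrm{id}_R$, this exhibits $R$ as a graded direct summand of $S$; in particular, $\overline{R}$ is a graded direct summand of $\overline{S}$ as well.

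Second, I would invoke the Noether bound in its mixed-characteristic refinement (due to Fleischmann and Fogarty, extended beyond the field case): whenever $|G|$ is invertible in the base ring $A$, the ring $S^G$ is generated as an $A$-algebra by homogeneous invariants of degree at most $|G|=D$. Thus we may write $R = A[f_1,\ldots,f_t]$ with each $f_i$ homogeneous and $\deg(f_i)\leq D$. Note that $\m=R_{>0}+pR=(f_1,\ldots,f_t,p)R$ agrees with the maximal ideal appearing in Corollary \ref{main Coroll}.

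With both ingredients in place, Corollary \ref{main Coroll} applies verbatim and yields $Q^{(nD)}\subseteq \m^n$ for every homogeneous prime $Q\subseteq R$ and every $n\geq 1$. The only subtle point — and the one that does real work — is the appeal to the Noether bound: one must ensure that the generation-in-degree-$\leq |G|$ result, traditionally stated over a field of characteristic coprime to $|G|$, is available over a discrete valuation ring $A$ whose residue characteristic is coprime to $|G|$. This is the place where the phrase "several improvements of the Noether bound" in the statement is used; once it is granted, the rest of the argument is a direct application of the machinery already developed.
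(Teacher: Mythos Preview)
Your proposal is correct and follows essentially the same route as the paper: cite the Fleischmann/Fogarty improvement of the Noether bound to obtain homogeneous $A$-algebra generators of $R=S^G$ in degree at most $D=|G|$, and then apply Corollary~\ref{main Coroll}. You have simply made explicit the Reynolds-operator argument for the graded splitting, which the paper leaves implicit in its one-line proof.
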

\begin{proof}
This follows from \cite[Theorem 3.1]{Fleischmann} (see also \cite{Fogarty}) and Corollary \ref{main Coroll}.
\end{proof}

\section{Comparison between differential and symbolic powers and Examples} \label{Section comparison}

We continue with the same notation introduced in the previous sections, focusing only on the case in which $A$ is a discrete valuation ring with uniformizer $p$ and perfect residue field, and that $A$ has a $p$-derivation. One key feature of this article is the introduction of a new differential power in mixed characteristic that applies to ideals of $R$ whenever the inclusion $\ov{R} \subseteq \ov{S}$ splits. In this last section we compare it with the notion already available in the literature, and study its relations with symbolic powers in more details.

Let $Q$ be a prime ideal of $R$ that contains $p$. We want to compare the differential power $Q\Dq{n}$ with the mixed differential power $Q\difM{n}$. However, in order to even define the latter, we need to assume that $R$ has a $p$-derivation. It is therefore natural to put, as an extra assumption, that there is a $p$-derivation on $S$ that restricts to a $p$-derivation on $R$.

\begin{setting}\label{second setting}
	Let $A$ be a discrete valuation ring with uniformizer $p$. Let $S=A[x_1,\ldots,x_m]$ be a graded polynomial ring over $A$, with $\deg(x_i)>0$. Assume that $S$ has a $p$-derivation $\delta_S:S \to S$ that restricts to a $p$-derivation $\delta_R:R \to R$. Given homogeneous elements $f_1,\ldots,f_t$, let $R=A[f_1,\ldots,f_t]$ be the graded subring of $S$ generated by such elements.
\end{setting}

The following condition will be of interest below:

\begin{definition} Let $A$ be a ring. An inclusion $R\subseteq S$ of $A$-algebras is \emph{(order) differentially extensible} if for every $\partial\in D^n_{R|A}$, there is some $\widetilde{\partial}\in D_{S|A}$ such that $\widetilde{\partial}_{|_R}=\partial$.
	\end{definition}

Several classical invariant subrings of polynomial rings are differentially extensible; for instance, Veronese subrings of $A[x_1,\ldots,x_m]$, with $m \geq 2$ are differentially extensible \cite[Proposition 6.4]{DiffSig}. We note that while \cite[Proposition 6.4]{DiffSig} is written over fields, the same argument works more generally.

If we assume that $R \subseteq S$ is differentially extensible, then given a prime ideal $Q$ containing $p$ we always have containments
\[
Q^n \subseteq Q^{(n)} \subseteq Q\Dq{n} \subseteq Q\difM{n}.
\]
Thanks to Proposition \ref{Proposition Dq}, the only containment that is not already clear is $Q\Dq{n} \subseteq Q\difM{n}$.

\begin{lemma} \label{Lemma containment new and mixed diff}
Assume Setting \ref{second setting}. Let $R=A[f_1,\ldots,f_t]$ be such that $\ov{R}$ is a graded direct summand of $\ov{S}$. Assume that the inclusion $R \subseteq S$ is differentially extensible. For all $n \geq 1$ and all prime ideals $Q$ containing $p$, we have
\[
Q\Dq{n} \subseteq Q\difM{n}.
\]
\end{lemma}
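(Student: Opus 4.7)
The plan is to unwind both definitions and use the two hypotheses (differential extensibility and the splitting $\ov{\beta}$) exactly where they are needed. Let $x \in Q\Dq{n}$. To show $x \in Q\difM{n}$, I need to verify that $\delta_R^a(\partial(x)) \in Q$ for every $\partial \in D^b_{R|A}$ and every pair of nonnegative integers with $a+b \leq n-1$.

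First I would use differential extensibility to produce, for each such $\partial$, a lift $\widetilde{\partial} \in D^b_{S|A}$ with $\widetilde{\partial}|_R = \partial$. Since $\delta_R = \delta_S|_R$ by the assumption of Setting \ref{second setting}, applying $\delta_R$ repeatedly to $\partial(x) \in R$ agrees with applying $\delta_S$ repeatedly to $\widetilde{\partial}(x) \in R$. Hence the element
\[
y \colonequals \delta_R^a(\partial(x)) = \delta_S^a(\widetilde{\partial}(x))
\]
lies in $R$.

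Next I would invoke the membership $x \in Q\Dq{n}$. Since $a + b < n$, the definition of $Q\Dq{n}$ gives
\[
\ov{\beta}\!\left( \ov{\delta_S^a(\widetilde{\partial}(x))} \cdot \ov{S}\right) = \ov{\beta}(\ov{y}\,\ov{S}) \subseteq \ov{Q},
\]
and in particular $\ov{\beta}(\ov{y}) \in \ov{Q}$. Because $y \in R$ and $\ov{\beta}\!:\ov{S} \to \ov{R}$ splits the inclusion $\ov{R} \hookrightarrow \ov{S}$, we have $\ov{\beta}(\ov{y}) = \ov{y}$. Thus $\ov{y} \in \ov{Q}$, which means $y \in Q + (p)R = Q$, where the last equality uses the hypothesis $p \in Q$. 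This yields $\delta_R^a(\partial(x)) \in Q$, as required.

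There is no real obstacle here: each hypothesis in the lemma is used once in a clean way. Differential extensibility is used to transfer the problem from $R$ to $S$ so that the $p$-derivation $\delta_S$ and the differential operators on $S$ (which is where $Q\Dq{n}$ is defined) can act; the splitting $\ov{\beta}$ is used to recover the element itself after passing modulo $p$; and the assumption $p \in Q$ lets us conclude from $y \in Q + (p)$ that $y \in Q$. The only subtle point to state carefully is that $\delta_S^a(\widetilde{\partial}(x))$ actually equals $\delta_R^a(\partial(x))$, which follows from $\widetilde{\partial}(x) = \partial(x) \in R$ together with $\delta_S|_R = \delta_R$.
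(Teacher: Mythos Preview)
Your proof is correct and follows essentially the same approach as the paper's: the paper argues by contrapositive (taking $x \notin Q\difM{n}$ and showing $x \notin Q\Dq{n}$), while you argue directly, but both use differential extensibility to lift $\partial$ to the same order on $S$ and then the identity $\delta_S|_R = \delta_R$ to match the iterated $p$-derivations. Your write-up is in fact more explicit than the paper's about the final step---invoking the splitting $\ov{\beta}$ to get $\ov{\beta}(\ov{y}) = \ov{y}$ and using $p \in Q$ to pass from $y \in Q + (p)$ to $y \in Q$---which the paper leaves implicit.
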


\begin{proof}
Let $\delta_R\!:R \to R$ be the restriction of $\delta_S$ to $R$. Let $x \in R$ be such that $x \notin Q\difM{n}$. Then, there exist integers $a,b$ with $a+b<n$ and $\partial \in D_{R|A}^b$ such that $\delta_R^a(\partial(x)) \notin Q$. Since we are assuming that $R \subseteq S$ is differentially extensible, there exists $\partial' \in D_{S|A}^b$ such that $\partial'_{|R} = \partial$. In particular, $\partial(x) = \partial'(x) \in R$. It follows that $\delta_S(\partial'(x)) = \delta_R(\partial(x)) \notin Q$, and therefore $x \notin Q\Dq{n}$.
\end{proof}

On the other hand, if the inclusion $R \subseteq S$ is not differentially extensible, then it is easy to find examples where the containment of Lemma \ref{Lemma containment new and mixed diff} does not hold.

\begin{example} \label{Example non-differentially-extensible}
Consider the split inclusion $R=\ZZ_p[x^2] \subseteq S=\ZZ_p[x]$. This inclusion is not differentially extensible, as differentiation by the variable $x^2$ in $R$ does not extend to a differential operator on $S$ of the same order. In this setting, for $\m=(p,x^2)\subseteq R$, we claim that $x^2\in \m\Dq{2} \smallsetminus \m\difM{2}$. To see that $x^2\notin \m\difM{2}$, we note that $\m\difM{2}=\m^2$ by \cite[Theorem B]{ZNmixed}, and because $\m$ is a maximal ideal. On the other hand, $(\delta_S\circ  D^0_{S|\ZZ_p})(x^2)$ and $D^1_{S|\ZZ_p}(x^2)$ are both contained in $xS$. Therefore, after applying the splitting $\ov{\beta}: \ov{S} \to \ov{R}$, their images are contained in $x^2 \ov{R}$. It follows that $x^2\in \m\Dq{2}$, as required.
\end{example}

\begin{remark}\label{remark p derivation p not in second symbolic power}
Suppose that $R$ is a ring with a $p$-derivation $\delta$, and $Q$ is a prime in $R$ containing $p$. Then $\delta(p) \notin Q$, so $p \notin Q\difM{2}$, which in particular implies $p \notin Q^{(2)}$ by \cite[Proposition 3.19 (4)]{ZNmixed}.
\end{remark}

\begin{theorem} \label{Proposition equality regular prime}
Assume Setting \ref{second setting}. Suppose that $\ov{R}$ is a direct summand of $\ov{S}$, and assume that the inclusion $R \subseteq S$ is differentially extensible.  If $Q \in \Spec(R)$ is a prime that contains $p$ and such that $R_Q$ is regular, then $Q^{(n)} = Q\Dq{n} = Q\difM{n}$.
\end{theorem}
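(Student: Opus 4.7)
The plan is to reduce the triple equality $Q^{(n)} = Q\Dq{n} = Q\difM{n}$ to showing the single inclusion $Q\difM{n} \subseteq Q^{(n)}$. Indeed, Proposition \ref{Proposition Dq} gives $Q^{(n)} \subseteq Q\Dq{n}$, and Lemma \ref{Lemma containment new and mixed diff} (which uses the differential extensibility hypothesis) gives $Q\Dq{n} \subseteq Q\difM{n}$, so the remaining task is to close the loop.

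To prove $Q\difM{n} \subseteq Q^{(n)}$, I would first verify that $Q\difM{n}$ is $Q$-primary. This essentially repeats the argument of Proposition \ref{Proposition Dq} using Lemma \ref{Lemma Prop delta}, and it is also the content of the analogous statements in \cite[Section 3]{ZNmixed}. Once this is established, since $Q^{(n)} = (QR_Q)^n \cap R$ (using the regularity of $R_Q$ to collapse symbolic into ordinary powers), and since $Q\difM{n} = Q\difM{n} R_Q \cap R$ by $Q$-primariness, the desired containment reduces to showing $Q\difM{n} R_Q \subseteq (QR_Q)^n$.

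The heart of the argument is then the contrapositive: given $f \in R$ with $f \notin (QR_Q)^n$, I would construct a composition $\delta_R^a \circ \partial$ with $\partial \in D^b_{R|A}$ and $a+b \leq n-1$ such that $\delta_R^a(\partial(f)) \notin Q$. Since $R_Q$ is regular and contains $p$, choose a regular system of parameters $y_1, \dots, y_d$ for $R_Q$, arranging $y_1 = p$ when $p \notin (QR_Q)^2$, and otherwise expressing $p$ in terms of the $y_i$. Then $\gr_Q R_Q$ is a polynomial ring in which the initial form of $f$ is a nonzero element of degree less than $n$. Using the differential extensibility of $R \subseteq S$ to lift suitable derivatives with respect to the parameters to elements of $D^{\bullet}_{R|A}$, and combining these with iterates of $\delta_R$ to handle the $p$-adic direction of the parameters, I would mimic the scheme of \cite[Theorem B]{ZNmixed} applied to a Cohen presentation of $\widehat{R_Q}$ to produce the desired operator.

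The main obstacle I anticipate is the interaction between $p$-derivations, localization, and completion: $\delta_R$ does not in general descend to $R_Q$ nor extend canonically to $\widehat{R_Q}$, so one must argue that operators on $R$ itself (pulled back from $S$ via extensibility) already suffice to separate $f$ from $Q$ whenever $f$ lies outside $(QR_Q)^n$. The regularity of $R_Q$, together with the extensibility hypothesis, is what makes this feasible; the technical effort concentrates on verifying that a differential operator constructed formally in $\widehat{R_Q}$ can be realized, modulo a high enough power of $Q$, by an honest operator in $D^b_{R|A}$ so that its value on $f$ lies outside $Q$.
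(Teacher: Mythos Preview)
Your reduction to the single inclusion $Q\difM{n} \subseteq Q^{(n)}$ via Proposition~\ref{Proposition Dq} and Lemma~\ref{Lemma containment new and mixed diff} is exactly how the paper begins, and your observation that everything is $Q$-primary so one may localize at $Q$ is also shared. The divergence is in how the remaining inclusion is established.

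The paper avoids your proposed explicit construction of operators entirely. The key point you are missing is Remark~\ref{remark p derivation p not in second symbolic power}: since $R$ carries a $p$-derivation $\delta_R$ and $\delta_R(p)\notin Q$, one has $p\notin Q^{(2)}$, so $p$ is a minimal generator of $QR_Q$ and $\ov{R_Q}=R_Q/pR_Q$ is regular. This makes $R_Q$ essentially smooth over $A$ (flat with geometrically regular fibers), and then \cite[Lemma~3.20 and Proposition~3.22]{ZNmixed} give $(Q\difM{n})_Q=(Q_Q)\difM{n}=(Q_Q)^{(n)}$ directly. No Cohen presentation, no hand-built operators, no completion.

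Two specific corrections to your sketch. First, your anticipated obstacle is not one: $p$-derivations \emph{do} localize (the associated lift of Frobenius sends units to units, so extends to any localization), and this is precisely what \cite[Lemma~3.20]{ZNmixed} records. Second, differential extensibility goes the wrong way for the use you propose: it extends operators from $R$ to $S$, not the reverse, so it cannot help you manufacture operators on $R$ adapted to a regular system of parameters of $R_Q$. In the paper, differential extensibility is used only for the inclusion $Q\Dq{n}\subseteq Q\difM{n}$ and plays no role in the step $Q\difM{n}\subseteq Q^{(n)}$; that step needs only the regularity of $R_Q$ and the existence of $\delta_R$.
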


\begin{proof}
Since all the ideals involved in the statement of this proposition are $Q$-primary, it suffices to show the equalities after localizing at $Q$. By Remark \ref{remark p derivation p not in second symbolic power}, $\ov{R_Q}$ is regular, and therefore $R_Q$ is essentially smooth over $A$ since it is flat with geometrically regular fibers. By \cite[Lemma 3.20 and Proposition 3.22]{ZNmixed} we conclude that 
\[
(Q\difM{n})_Q = (Q_Q)\difM{n} = (Q_Q)^{(n)} = (Q^{(n)})_Q.
\]
This shows $Q^{(n)} = Q\difM{n}$. Since $Q^{(n)} \subseteq Q\Dq{n} \subseteq Q\difM{n}$, the proof is complete.
\end{proof}

In equal characteristic, it is shown under mild assumptions that the equality between differential powers and symbolic powers of $Q$ implies that $Q$ is a regular prime (see \cite[Theorem 10.2]{DiffSig}). We do not know whether the same also holds in mixed characteristic with mixed differential powers in place of differential powers.

Example \ref{Example non-differentially-extensible} shows that there are inclusion of algebras that are not differentially extensible for which $Q\Dq{n}$ and $Q\difM{n}$ do not coincide. It is therefore natural to ask the following question.

\begin{question} \label{Question equality} Under the assumptions of Lemma \ref{Lemma containment new and mixed diff}, is $Q\Dq{n}=Q\difM{n}$ for all $n \geq 1$?
\end{question}

We point out that, if Question \ref{Question equality} has a positive answer, then $Q\Dq{n}$ would be a natural extension of $Q\difM{n}$ to direct summands that, in principle, might not have a $p$-derivation of their own. On the other hand, if the answer was negative, $Q\Dq{n}$ would still be a better approximation of our main object of study, the symbolic powers $Q^{(n)}$, than $Q\difM{n}$ is.

These two notions of mixed differential powers can coincide even if they are not the symbolic powers.

\begin{example}
	Consider $R = \ZZ_p[x^2,xy,y^2] \subseteq S = \ZZ_p[x,y]$ and let $\m = (p, x^2,xy,y^2)$ be the homogeneous maximal ideal in $R$. Fix the graded splitting $\ov\beta\!: \ov{S} \to \ov{R}$ given by projecting onto the even degree components, and let $\delta$ be a $p$-derivation on $S$. Note that $R \subseteq S$ is differentially extensible.
	
	Since $\m$ is a maximal ideal, $\m^{(2)} = \m^2$. However, we claim that $\m^{(2)} \subsetneq \m\Dq{2} = \m\difM{2}$. In $S$, $D^1_{S | \ZZ_p}(x^2) \subseteq (x)$, $D^1_{S | \ZZ_p}(y^2) \subseteq (y)$, and $D^1_{S | \ZZ_p}(xy) \subseteq (x,y)$. Therefore, 
	$$\ov{\beta}((D^1_{S | \ZZ_p}(x^2),D^1_{S | \ZZ_p}(xy),D^1_{S | \ZZ_p}(y^2))\ov{S}) \subseteq \ov{\m}.$$ 
	Moreover, if $\delta_S$ is any $p$-derivation on $S$, we have that $\delta_S(x^2) \in (x,p)$, $\delta_S(y^2) \in (y,p)$, and $\delta_S(xy) \in (x,y,p)$, so 
	$$\ov{\beta}((\delta_S(x^2),\delta_S(xy),\delta_S(y^2))\overline{S}) \subseteq \ov{\m}.$$ 
	This shows that $x^2, xy, y^2 \in \m\Dq{2}$. Moreover, $\ov{\delta_S(p)} = \ov{1}$, so $p \notin \m\Dq{2}$. On the other hand, $\ov{\delta_S(p^2)} = 0$, so $p^2 \in \m\Dq{2}$. We conclude that $\m\Dq{2} = (p^2,x^2,xy,y^2)$, which implies that $\m\difM{2} \supseteq (p^2,x^2,xy,y^2)$. Since $\delta_R(p) \notin \m$ for any $p$-derivation on $R$, we have that $p \notin \m\difM{2}$. Finally, since $\m\difM{2} \subseteq \m$, we conclude that $\m\difM{2} = (p^2,x^2,xy,y^2)$.
\end{example}

We end the article with two examples that show how the main results can be applied.

\begin{example} 
Let $R$ be the $D$th Veronese subring of $S=A[x_1,\ldots,x_m]$, where $\deg(x_i)=1$ for all $i$. Let $\m$ be the maximal ideal of $R$ generated by $p$ and the monomials of degree $D$ in the variables of $S$. It follows from Corollary \ref{main Coroll} that $Q^{(nD)} \subseteq \m^n$ for all $n \geq 1$ and all homogeneous primes $Q \subseteq R$. In fact, in our assumptions $R$ is a graded direct sum of $S$, and therefore we can apply Corollary \ref{main Coroll}. Note that, when $p \equiv 1 \bmod D$, then $R$ can also be seen as the ring of invariants under the action on $S$ of a group of order $D$, and the containment  also follows from Theorem \ref{THM Noether bound} in this case.
\end{example}

\begin{example} 
Let $X=(x_{ij})$ be an $m\times n$ matrix of indeterminates, and $R=A[X]/I_2(X)$ be the quotient of $A[X]$ by the ideal generated by the $2 \times 2$ minors of $X$. Let $\m=(X,p)R$. We have that $R\cong A[y_i \cdot z_j \mid i =1,\ldots,m, j=1,\ldots,n]$ is a graded direct summand of $S=A[y_i,z_j \mid i=1,\ldots,m,j=1,\ldots,n]$, where $\deg(y_i) = \deg(z_j) = 1$ for all $i,j$, and therefore by Corollary \ref{main Coroll} we have that $Q^{(2n)} \subseteq \m^n$ for all $n \geq 1$ and all homogeneous primes $Q \subseteq R$.
\end{example}

\section*{Acknowledgements}

We thank Linquan Ma for pointing out that our proofs of Theorems \ref{THM no p} and \ref{main THM} were yielding stronger results than the ones originally stated. We thank the anonymous referee. The first author was partially supported by the PRIN 2020 project 2020355B8Y ``Squarefree Gr{\"o}bner degenerations, special varieties and related topics''. The second author was partially supported by NSF Grant DMS~\#2140355. The third author was partially supported by NSF CAREER Grant DMS~\#2044833.

\bibliographystyle{alpha}
\bibliography{References}

\end{document}